\newcommand\be{\begin{equation}}
\newcommand\ee{\end{equation}}
\newcommand\bea{\begin{eqnarray}}
\newcommand\eea{\end{eqnarray}}
\newcommand\beaa{\begin{eqnarray*}}
\newcommand\eeaa{\end{eqnarray*}}
\newcommand\beba{\begin{equation}\left\{\begin{array}{rcl}}
\newcommand\eeba{\end{array}\right.\end{equation}}
\newcommand\bebaa{\begin{equation*}\left\{\begin{array}{rcl}}
\newcommand\eebaa{\end{array}\right.\end{equation*}}
\newcommand\bR{{\mathbb{R}}}
\newcommand\bC{{\mathbb{C}}}
\newcommand\bN{{\mathbb{N}}}
\newcommand\cZ{{\mathcal{Z}}}
\newcommand\cN{{\mathcal{N}}}
\newcommand{\dint}{\displaystyle \int}
\newcommand{\dsum}{\displaystyle \sum}
\newcommand{\dlim}{\displaystyle \lim}
\newcommand{\lng}{\left\langle  \right.}
\newcommand{\rng}{\left.  \right\rangle}
\newtheorem{theorem}{Theorem}[section]
\newtheorem{lemma}[theorem]{Lemma}
\newtheorem{corollary}[theorem]{Corollary}
\newtheorem{remark}[theorem]{Remark}
\newtheorem{proposition}[theorem]{Proposition}
\numberwithin{equation}{section}
\begin{document}

\title{Asymptotic profiles of zero points of solutions to the heat equation}

\author[H. Ishii]{Hiroshi Ishii}
\address{Department of Mathematics, Hokkaido University, Sapporo, 060-0810, Japan}
\email{ishii.hiroshi.7n@kyoto-u.ac.jp}

\thanks{Date: \today. Corresponding author: H. Ishii}


\begin{abstract}
In this paper, we consider the asymptotic profiles of zero points for the spatial variable of the solutions to the heat equation.
By giving suitable conditions for the initial data, 
we prove the existence of zero points by extending the high-order asymptotic expansion theory for the heat equation.
This reveals a previously unknown asymptotic profile of zero points diverging at $O(t)$.
In a one-dimensional spatial case, we show the zero point's second and third-order asymptotic profiles in a general situation.
We also analyze a zero-level set in high-dimensional spaces and obtain results that extend the results for the one-dimensional spatial case.
\end{abstract}

\maketitle
\section{Introduction}
We consider a Cauchy problem 
\be\label{eq:main}
\begin{cases}
\dfrac{\partial u}{\partial t}=\Delta_{x} u \quad (t>0,\ x\in\bR^{d}), \vspace{3mm}\\
u(0,x)=u_0(x) \quad (x\in\bR^d),
\end{cases}
\ee
where $u=u(t,x)\in\bR\ (t>0,x\in\bR^{d})$, $\Delta_{x}$ is Laplace operator for the spatial variable $x$, and $u_0\in L^1(\bR^{d})\cap L^{\infty}(\bR^{d})$.
Throughout this paper, we assume that $\| u_0 \|_{L^1}\neq 0$ and denote the Euclidean norm and inner product on $\bR^{d}$ by $|\cdot|$ and $\lng\cdot,\cdot\rng$, respectively.

In this paper, we consider the asymptotic behavior of the zero level set $\cZ(t):= \{ x\in\bR^d \mid u(t,x)=0 \}$,
where $u(t,{x})$ is a unique bounded solution of \eqref{eq:main} which can be written as follows:
\be\label{basic sol.}
u(t,{x})=(G(t)*u_0)(x)=\int_{\bR^{d}} G(t,x-y) u_0(y)dy\quad (t>0,\ x\in\bR^{d})
\ee
of \eqref{eq:main}.
Here, $G(t,{x})$ is the heat kernel defined as
\be \label{heat-ker}
G(t, {x}) := \dfrac{1}{(4\pi t)^{d/2}} e^{-|x|^2/4t}.
\ee

The analysis of the zero level set is important to understanding the behavior of sign-changing solutions of parabolic equations.
To analyze the behavior of sign-changing solutions, many researchers have focused  on the zero level set or the sign-changing number of the initial data \cite{Angenent, Chung, DGM, Matano, Mizoguchi}.
Especially, the case $d=1$ is particularly well analyzed.
In parabolic equations, one of the most fundamental properties is that $\cZ(t)$ is a discrete set \cite{Angenent}.
Furthermore, it is known that the number of elements of $\cZ(t)$
does not increase as time passes \cite{DGM, Matano}.

In investigating the dynamics of the sign-changing solution in detail,
the asymptotic behavior of $\cZ(t)$ has been analyzed to the heat equation with $d=1$.
Mizoguchi \cite{Mizoguchi} estimated the upper bound of $\cZ(t)$ in the case that $u_0(x)$ is sign-changing at finite times
and proved $\cZ(t)\subset [-C_0 t, C_0 t]$ for sufficiently large $t>0$ with some $C_0>0$.
Moreover, it has been reported that there exist $u_0(x)$ and $x^{*}(t)\in \cZ(t)$ such that
$x^{*}(t)>C_1 t$ holds for sufficiently large $t>0$ with some $C_1>0$.
Chung \cite{Chung} analyzed the asymptotic behavior of zero points when $u_0(x)$ belongs to $L^1(\bR, 1+ |x|^{k+1})$ for some $k\in\bN$ and satisfies
\be\label{Assu}
\int_{\bR} y^{j} u_0(y)dy =0\ (j=0,1,\ldots,k-1),\quad  \int_{\bR} y^{k} u_0(y)dy \neq 0.
\ee
As the result of \cite{Chung}, for sufficiently large $t>0$, there exist $x^{*}_{j}(t)\in \cZ(t)\ (j=1,2,\ldots,k)$
such that 
\beaa
\dlim_{t\to+\infty} \dfrac{x^{*}_{j}(t)}{2\sqrt{t}} =h_j,
\eeaa
where $h_j\ (j=1,2,\ldots,k)$ are mutually different zero points of the Hermite polynomial $H_k(x)$ with the order $k$ defined as 
\beaa
H_k(x):= (-1)^{k} e^{x^2} \dfrac{d^k}{dx^k}[e^{-x^2}].
\eeaa
The proof of these results is based on the Hermite polynomial approximation deduced using the asymptotic expansion for the heat equation (see \cite{Chung} and the references therein).
Chung further analyzed the behavior of the zero point of the solution in the case $d\ge 2$ and also obtained results for the spherically symmetric initial data.

We expect from these results that the elements of $\cZ(t)$ have various asymptotic profiles.
However, even the case $d=1$ does not yield a clear answer.
Especially, even though there is a case that the asymptotic behavior of an element of $\cZ(t)$ is $O(t)$ as $t\to+\infty$,
the characterization of its coefficient has not been given.
Indeed, it seems interesting that the asymptotic behavior of some zero point is $O(t)$ 
while $\sqrt{t}$ is the scale of self-similarity of the solution of the heat equation.
The aim of this paper is to explicitly give asymptotic profiles of zero points
and to reveal what properties of the initial data make a difference in the orders.

As related problems, the level set of the solution to the heat equation has been analyzed theoretically in connection with the hot spot problems \cite{CK, Ishige1, Ishige2, JS} 
and for application aspects such as level set methods \cite{MBO} and image processing \cite{GGOU, HIH}.
Among them, we mention the hot spot problem in the heat equation.
It is the study of the behavior of the maximum point of a solution, and the maximum points are called hot spots.
Under the non-negativity of the initial condition and relatively general conditions, we can show that the hot spots are critical points as pointed out in \cite{CK, JS}.
Since the partial derivatives of the solution are also solutions to the heat equation, 
analyzing the behavior of the zero point also allows analysis of hot spots through critical points.
In this sense, the hot spot problem is an issue related to the motivation of this paper.
The hot spot problem for the heat equation on unbounded domains in high dimensional space is often analyzed by imposing some non-negativity on the initial data \cite{CK, Ishige1, Ishige2, JS}.
To the best of our knowledge, hot spots asymptotically move slower than $O(t)$ in many cases, and there is no consideration of zero points moving at $O(t)$ which is the aim of this paper.

In this paper, we develop higher-order asymptotic expansions of the heat equation in order to analyze the behavior of the zero points in detail.
Furthermore, based on the analysis, 
the asymptotic behavior of the zero point in the case $d=1$ is revealed up to the constant term.
This result provides a complete answer for the asymptotic behavior of the zero point in the one-dimensional case 
when the initial data has fast decaying and finite sign changes.

The paper is organized as follows:
Section \ref{sec:pre} presents results on higher-order asymptotic expansions on moving coordinate systems for analyzing the asymptotic behavior of zero points.
In Section \ref{sec:d=1}, the asymptotic behavior of the zero point for $d=1$ is analyzed in detail with the proofs.
We summarize the analysis of specific cases for the higher dimensional case in Section \ref{sec:d>1}.
Finally, we mention related problems in Section \ref{sec:dis}.

\section{Preliminaries}\label{sec:pre}
We prepare symbols and notations to explain the analytical methods used in this study.
For all multi index $\alpha=(\alpha_1,\ldots,\alpha_d)\in (\bN\cup \{0\})^{d}$, we define the sum of components as  $|\alpha|_{d}:=\dsum^{d}_{j=1}\alpha_j$.
Also, the $\alpha$ partial derivative to the smooth function $f(x)$ is defined as follows:
\beaa
\partial^{\alpha}_{x}f(x):= \dfrac{\partial^{|\alpha|}f}{\partial^{\alpha_1}_{x_1}\cdots \partial^{\alpha_d}_{x_d}}(x).
\eeaa
We also define $x^{\alpha}=x^{\alpha_1}_1\cdots x^{\alpha_d}_d$ and $\alpha! = \alpha_1 !\cdots \alpha_d!$.

Under this preparation, it is well known that higher-order asymptotic expansions for solutions such as the following:
\begin{lemma}\cite[Theorem 2.2]{Chung}\label{Her-app}
Let $u$ be the bounded solution on $(0,+\infty)\times\bR^d$ to the heat equation \eqref{eq:main} with initial data $u_0\in L^1(\bR^d; 1+|x|^{n+1})$
for some nonnegative integer $n$.
There is then a positive constant $C=C(n,d)$ such that
\beaa
\left|u(t,x)- G(t,x) \dsum_{|\alpha|\le n} \dfrac{\int_{\bR^d} y^{\alpha} u_0(y)dy}{(4t)^{|\alpha|/2}  \alpha!} \prod^{d}_{j=1}  H_{\alpha_j}\left(\dfrac{x_j}{2\sqrt{t}} \right) \right| \le C t^{- (n+d+1)/2} \| |\cdot|^{n+1}u_0 \|_{L^1}
\eeaa
for all $x\in\bR^d$.
\end{lemma}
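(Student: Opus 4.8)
The plan is to start from the convolution representation \eqref{basic sol.} and Taylor-expand the heat kernel in its second argument. Writing $u(t,x)=\int_{\bR^d}G(t,x-y)u_0(y)\,dy$, I would expand the map $y\mapsto G(t,x-y)$ about $y=0$ to order $n$ with the integral form of the remainder,
\[
G(t,x-y)=\dsum_{|\alpha|\le n}\dfrac{(-y)^{\alpha}}{\alpha!}\,\partial^{\alpha}_{x}G(t,x)+R_n(t,x,y),
\]
\[
R_n(t,x,y)=(n+1)\dsum_{|\alpha|=n+1}\dfrac{(-y)^{\alpha}}{\alpha!}\int_0^1(1-\theta)^{n}\,(\partial^{\alpha}_{x}G)(t,x-\theta y)\,d\theta,
\]
where I have used $\partial^{\alpha}_{y}[G(t,x-y)]=(-1)^{|\alpha|}(\partial^{\alpha}_{x}G)(t,x-y)$ together with $(-1)^{|\alpha|}y^{\alpha}=(-y)^{\alpha}$.

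The key algebraic step is the identity that converts the polynomial part into the asserted Hermite sum. Applying the Rodrigues-type definition of $H_k$ after the self-similar rescaling $\xi_j=x_j/(2\sqrt{t})$, and noting $(2\sqrt{t})^{|\alpha|}=(4t)^{|\alpha|/2}$, I would verify
\[
\partial^{\alpha}_{x}G(t,x)=\dfrac{(-1)^{|\alpha|}}{(4t)^{|\alpha|/2}}\,G(t,x)\prod_{j=1}^{d}H_{\alpha_j}\!\left(\dfrac{x_j}{2\sqrt{t}}\right).
\]
Substituting this into the finite sum and integrating against $u_0$, the sign factors $(-1)^{|\alpha|}$ cancel in pairs and the coefficient of each term becomes exactly $\int_{\bR^d}y^{\alpha}u_0(y)\,dy/((4t)^{|\alpha|/2}\alpha!)$. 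Hence the polynomial part reproduces the subtracted term verbatim, and the quantity inside the absolute value on the left of the lemma equals $\int_{\bR^d}R_n(t,x,y)u_0(y)\,dy$.

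It then remains to estimate this remainder integral, which is the main quantitative task. Applying the Hermite identity at the top order $|\alpha|=n+1$ gives, for every $z\in\bR^d$,
\[
|\partial^{\alpha}_{x}G(t,z)|=\dfrac{1}{(4t)^{(n+1)/2}}\,G(t,z)\prod_{j=1}^{d}\left|H_{\alpha_j}\!\left(\dfrac{z_j}{2\sqrt{t}}\right)\right|,
\]
and since each factor $e^{-s^2}|H_k(s)|$ is bounded on $\bR$, the Gaussian absorbs the Hermite growth and yields a bound that is \emph{uniform in} $z$, namely $\dsup_{z\in\bR^d}|\partial^{\alpha}_{x}G(t,z)|\le C(n,d)\,t^{-(n+d+1)/2}$. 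Inserting this into $R_n$, using $|y^{\alpha}|\le|y|^{n+1}$ for $|\alpha|=n+1$ and the finiteness of $\dsum_{|\alpha|=n+1}1/\alpha!$, I obtain $|R_n(t,x,y)|\le C(n,d)\,t^{-(n+d+1)/2}|y|^{n+1}$ uniformly in $x$; integrating against $|u_0|$ produces precisely $C\,t^{-(n+d+1)/2}\||\cdot|^{n+1}u_0\|_{L^1}$.

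The step I expect to be the \textbf{main obstacle} is exactly this uniform-in-$z$ derivative bound. Because the integral remainder evaluates $\partial^{\alpha}_x G$ at the shifted point $x-\theta y$ rather than at $x$, I cannot retain the Gaussian decay in $x$ and must pass to a global supremum; this is why the right-hand side of the lemma carries no decay in $x$. Making that supremum finite with the sharp power $t^{-(n+d+1)/2}$ hinges on the boundedness of $e^{-s^2}H_k(s)$, and some care is needed to keep the combinatorial constant $C(n,d)$ under control through the finite sum over $|\alpha|=n+1$; the justification of the pointwise Taylor expansion (smoothness and integrability of $G(t,\cdot)$ for fixed $t>0$) is routine by comparison.
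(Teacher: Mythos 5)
Your proposal is correct, but note that the paper itself offers no proof of this lemma: it is quoted verbatim as Theorem 2.2 of the cited reference \cite{Chung}, so there is no internal argument to compare against. Your proof is the standard (and essentially the cited reference's) route: Taylor expansion of $y\mapsto G(t,x-y)$ with integral remainder, the Rodrigues-type identity $\partial^{\alpha}_{x}G(t,x)=(-1)^{|\alpha|}(4t)^{-|\alpha|/2}G(t,x)\prod_{j}H_{\alpha_j}\bigl(x_j/(2\sqrt{t})\bigr)$ to convert the polynomial part into the Hermite sum, and the uniform boundedness of $e^{-s^2}H_k(s)$ on $\bR$ to absorb the Gaussian evaluated at the shifted point $x-\theta y$ and obtain the sharp rate $t^{-(n+d+1)/2}$; all three steps, including the sign cancellation and the bound $|y^{\alpha}|\le|y|^{n+1}$ for $|\alpha|=n+1$, check out.
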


We analyze the asymptotic behavior of the zero point by considering a natural extension of this asymptotic expansion.
The following conditions are given for the initial data:
\be\label{condi1}
\forall \lambda\in\bR^{d},\quad \int_{\bR^{d}} e^{\lng \lambda,y\rng} |u_0(y)| dy<\infty. 
\ee
A concrete example satisfying \eqref{condi1} is bounded functions with compact support.

We assume that $u_0$ satisfies \eqref{condi1}.
For any $\eta\in\bR^{d}$, 
consider $v(t,x):= e^{<\eta,x>+t|\eta|^2}u(t,x+2t\eta)$.
Then, $v(t,x)$ is a solution of Cauchy problem
\beaa
\begin{cases}
\dfrac{\partial v}{\partial t}=\Delta v \quad (t>0,\ x\in\bR^{d}), \vspace{3mm}\\
v(0,x)= e^{<\eta, x>} u_0(x) \quad (x\in\bR^d).
\end{cases}
\eeaa
Since $v(0,x)$ belongs to $L^1(\bR^d; 1+|x|^{n+1})$ for all $n\ge 0$,
we can apply Lemma \ref{Her-app}.
We define the bilateral Laplace transform as
\beaa
U_0(\eta):= \int_{\bR^{d}} e^{\lng \eta , y \rng} u_0(y) dy,\quad \eta=(\eta_1,\ldots,\eta_d)\in\bR^{d}.
\eeaa
Notice that
\beaa
\partial^{\alpha}_{\eta}U_0(\eta)=\int_{\bR^d} y^{\alpha}e^{\lng \eta, y\rng} u_0(y)dy = \int_{\bR^d} y^{\alpha}v(0,y)dy,
\eeaa
we can deduce the following lemma.

\begin{lemma}\label{lem:high}
Let $u$ be the bounded solution on $(0,+\infty)\times\bR^d$ to the heat equation \eqref{eq:main} with initial data $u_0\in L^1(\bR^{d})\cap L^{\infty}(\bR^{d})$ satisfying \eqref{condi1}.
For any $\eta\in\bR$ and non-negative integer $n$,
there is a positive constant $C=C(n,d)$ such that $v(t,x)= e^{<\eta,x>+t|\eta|^2}u(t,x+2t\eta)$ satisfies
\beaa
\left|v(t,x)- G(t,x) \dsum_{|\alpha|\le n} \dfrac{\partial^{\alpha}_{\eta}U_0(\eta)}{(4t)^{|\alpha|/2}  \alpha!} \prod^{d}_{j=1}  H_{\alpha_j}\left(\dfrac{x_j}{2\sqrt{t}} \right) \right| \le C t^{- (n+d+1)/2} \| e^{\lng \eta,\cdot\rng}|\cdot|^{n+1}u_0 \|_{L^1}
\eeaa
for all $x\in\bR$.
\end{lemma}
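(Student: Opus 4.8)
The plan is to obtain Lemma \ref{lem:high} directly from Lemma \ref{Her-app}, applied not to $u$ itself but to the weighted, boosted function $v$; once $v$ is recognized as a bounded caloric function whose initial datum has the right moments, the expansion and the error estimate are inherited verbatim.

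First I would verify, cleanly and in one stroke, that $v$ is the heat semigroup applied to $e^{\langle\eta,\cdot\rangle}u_0$. Inserting the representation \eqref{basic sol.} into the definition of $v$ and substituting $z=x-y$, the exponent $\langle\eta,x\rangle+t|\eta|^2-|x+2t\eta-y|^2/4t$ collapses after completing the square to $\langle\eta,y\rangle-|x-y|^2/4t$, so that
\[
v(t,x)=\int_{\bR^d}G(t,x-y)\,e^{\langle\eta,y\rangle}u_0(y)\,dy=\bigl(G(t)*(e^{\langle\eta,\cdot\rangle}u_0)\bigr)(x).
\]
This identity simultaneously shows that $v$ solves \eqref{eq:main} with initial datum $v(0,\cdot)=e^{\langle\eta,\cdot\rangle}u_0$ and that $v(t,\cdot)\in L^\infty(\bR^d)$ for each $t>0$ (its sup-norm is at most $\|G(t,\cdot)\|_{L^\infty}\,\|e^{\langle\eta,\cdot\rangle}u_0\|_{L^1}$), i.e.\ $v$ is a bounded solution as required by Lemma \ref{Her-app}.

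The step that actually uses the hypothesis is checking $v(0,\cdot)=e^{\langle\eta,\cdot\rangle}u_0\in L^1(\bR^d;1+|x|^{n+1})$, and here the full force of \eqref{condi1} --- integrability of $|u_0|$ against $e^{\langle\lambda,\cdot\rangle}$ for \emph{every} $\lambda$, not merely for $\lambda=\eta$ --- is essential. I would dominate the polynomial weight by exponentials: fixing $\delta>0$, one has $|y|^{n+1}\le C_\delta e^{\delta|y|}$ and $e^{\delta|y|}\le\prod_{j=1}^d\bigl(e^{\delta y_j}+e^{-\delta y_j}\bigr)=\sum_{s\in\{\pm1\}^d}e^{\delta\langle s,y\rangle}$, whence
\[
e^{\langle\eta,y\rangle}\,|y|^{n+1}\le C_\delta\sum_{s\in\{\pm1\}^d}e^{\langle\eta+\delta s,\,y\rangle}.
\]
Each summand is integrable against $|u_0|$ by \eqref{condi1} with $\lambda=\eta+\delta s$, so $\|e^{\langle\eta,\cdot\rangle}|\cdot|^{n+1}u_0\|_{L^1}<\infty$ and $v(0,\cdot)$ lies in $L^1(\bR^d;1+|x|^{n+1})$ for every $n\ge0$. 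The same domination justifies differentiating $U_0(\eta)=\int_{\bR^d} e^{\langle\eta,y\rangle}u_0(y)\,dy$ under the integral sign, giving the moment identity $\int_{\bR^d}y^\alpha v(0,y)\,dy=\partial^\alpha_\eta U_0(\eta)$ already noted before the statement.

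It then remains to apply Lemma \ref{Her-app} to $v$ with $v(0,\cdot)$ in place of $u_0$ and to substitute the moment identity: the $\alpha$-th coefficient $\int_{\bR^d} y^\alpha v(0,y)\,dy/((4t)^{|\alpha|/2}\alpha!)$ becomes $\partial^\alpha_\eta U_0(\eta)/((4t)^{|\alpha|/2}\alpha!)$, reproducing the claimed main term, while the error is bounded by $C t^{-(n+d+1)/2}\||\cdot|^{n+1}v(0,\cdot)\|_{L^1}=Ct^{-(n+d+1)/2}\|e^{\langle\eta,\cdot\rangle}|\cdot|^{n+1}u_0\|_{L^1}$ with the same $C=C(n,d)$ as in Lemma \ref{Her-app}. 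The only genuinely non-routine point is the exponential-domination estimate of the previous paragraph, which is exactly where condition \eqref{condi1} enters; the rest is bookkeeping.
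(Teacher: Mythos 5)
Your proposal is correct and follows essentially the same route as the paper: rewrite $v$ as the heat semigroup applied to $e^{\langle\eta,\cdot\rangle}u_0$, check via \eqref{condi1} that this initial datum lies in $L^1(\bR^d;1+|x|^{n+1})$, identify its moments with $\partial^\alpha_\eta U_0(\eta)$, and invoke Lemma \ref{Her-app}. The paper states these steps tersely; your completing-the-square computation and the exponential-domination argument for the moment bounds are exactly the details it leaves implicit.
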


Let us define the zero level set of $U_0$
\beaa
\cN(U_0):=\{ \eta\in\bR^{d} \mid U_0(\eta)=0 \}.
\eeaa
Assume that $\cN(U_0)\neq \emptyset$ and fix $\eta^{*}\in\cN(U_0)$.
Since $U_0(\eta)$ is a analytic function on $\bC^{d}$, 
there exists $k\in\bN$ such that
for all $\alpha=(\alpha_1,\ldots,\alpha_d)\in (\bN\cup \{0\})^{d}$ with $|\alpha|:=\dsum^{d}_{j=1}\alpha_j <k$,
we have $\partial^{\alpha}_{\eta} U_0 (\eta^{*})=0$,
and there exists $\tilde{\alpha}\in (\bN\cup \{0\})^{d}$ satisfying $|\tilde{\alpha}|=k$ and
$\partial^{\tilde{\alpha}}_{\eta} U_0 (\eta^{*}) \neq 0$.
In this case, $\eta^{*}$ is called a zero point of $U_0(\eta)$ with multiplicity $k$.

From Lemma \ref{lem:high}, if $\cN(U_0)\neq \emptyset$ and $\eta^{*}\in\cN(U_0)$ has multiplicity $k$, then there is a $C=C(k,d,\eta^{*},u_0)$ such that
\beaa
\left|v(t,x)- \dfrac{G(t,x)}{(4t)^{k/2} } \dsum_{|\alpha| = k} \dfrac{\partial^{\alpha}_{\eta}U_0(\eta^{*})}{ \alpha!} \prod^{d}_{j=1}  H_{\alpha_j}\left(\dfrac{x_j}{2\sqrt{t}} \right) \right| \le C t^{- (k+d+1)/2}
\eeaa
for any $x\in\bR^d$. 
This means that
\be\label{lim:unif}
(4t)^{(d+k)/2} v(t,2\sqrt{t} x) \to \dfrac{e^{-|x|^2}}{\pi^{d/2}} \dsum_{|\alpha|=k} \dfrac{\partial^{\alpha}_{\eta}U_0(\eta^{*})}{ \alpha!} \prod^{d}_{j=1}  H_{\alpha_j}\left(x \right) =: \tilde{H}_{d,k}(x;\eta^{*})
\ee
holds uniformly as $t\to+\infty$.
Since it is uniform convergence, we can state the existence of zero points depending on the properties of $\tilde{H}_{d,k}(x;\eta^{*})$.
We have not yet found a unified method for this issue, and thus the analysis must be done in a case-by-case manner.

\begin{remark}
We note that the inequality in Lemma \ref{lem:high} can be rewritten as
\beaa
\left|v(t,x)- \dsum^{n}_{m=1} \dfrac{1}{(4t)^{(d+m)/2}} \tilde{H}_{d,m}\left(\dfrac{x}{2\sqrt{t}};\eta^{*} \right) \right| \le C t^{- (n+d+1)/2} \| e^{\lng \eta,\cdot\rng}|\cdot|^{n+1}u_0 \|_{L^1}.
\eeaa
\end{remark}

\begin{remark}
The condition \eqref{condi1} can be discussed more weakly for the fast-decaying initial data.
However, it cannot be extended to the case of slow decay, as in polynomial decay.
\end{remark}

\section{The case $d=1$}\label{sec:d=1}
Here we analyze the case $d=1$ in detail.
We first consider the asymptotic behavior of a zero point that is $O(t)$
\begin{proposition}\label{prop:bound}
Suppose that $u_0\in L^1(\bR)\cap L^{\infty}(\bR)$ satisfies \eqref{condi1}.
Assume that there exist $T>0$ and $x^{*}(t)\in Z(t)$ for $t>T$ satisfying $x^{*}\in C(T,\infty)$ and
\be\label{limsup}
\limsup_{t\to +\infty} \left| \dfrac{x^{*}(t)}{2t} \right| <\infty.
\ee
Then, $\theta:= \dlim_{t\to+\infty} \dfrac{x^{*}(t)}{2t}$ exists and belongs to $\cN(U_0)$.
\end{proposition}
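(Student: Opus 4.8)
The plan is to reduce everything to the zeroth--order case of Lemma \ref{lem:high}, exploiting the freedom to let the drift parameter $\eta$ depend on $t$. Write $\theta(t):=x^{*}(t)/(2t)$; by \eqref{limsup} this is a continuous function on a half--line that is bounded, say $|\theta(t)|\le M$. The argument splits into two parts: first, that every subsequential limit of $\theta(t)$ as $t\to+\infty$ lies in $\cN(U_0)$; and second, that this subsequential limit is unique, so that the genuine limit exists.

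For the first part, fix a sequence $t_n\to+\infty$ with $\theta(t_n)\to\theta_1$ for some $\theta_1\in[-M,M]$ (such sequences exist by boundedness). The key device is to run the moving--frame expansion with the $n$--dependent drift $\eta_n:=\theta(t_n)$, because then $v_n(t,x):=e^{\eta_n x+t\eta_n^2}u(t,x+2t\eta_n)$ satisfies $v_n(t_n,0)=e^{t_n\eta_n^2}u(t_n,2t_n\theta(t_n))=e^{t_n\eta_n^2}u(t_n,x^{*}(t_n))=0$; that is, in this frame the prescribed zero sits exactly at the spatial origin, where the Gaussian $G(t_n,0)=(4\pi t_n)^{-1/2}$ attains its peak. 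Applying Lemma \ref{lem:high} with order $0$ and $d=1$ at $x=0$, the constant $C=C(0,1)$ being independent of the drift, gives $(4\pi t_n)^{-1/2}|U_0(\eta_n)|\le C t_n^{-1}\|e^{\eta_n\cdot}|\cdot|u_0\|_{L^1}$, hence $|U_0(\eta_n)|\le C'\,t_n^{-1/2}\|e^{\eta_n\cdot}|\cdot|u_0\|_{L^1}$. Since all $\eta_n$ lie in the compact set $[-M,M]$, and since \eqref{condi1} together with the elementary bound $|y|\le e^{y}+e^{-y}$ makes $\sup_{|\eta|\le M}\|e^{\eta\cdot}|\cdot|u_0\|_{L^1}$ finite, the right--hand side tends to $0$. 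Continuity of $U_0$ then yields $U_0(\theta_1)=\lim_n U_0(\eta_n)=0$, so $\theta_1\in\cN(U_0)$.

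For the second part, note that $U_0$ extends to an entire function, because the integral defining it converges for every complex argument by \eqref{condi1}, and it is not identically zero: if it vanished on $\bR$ it would vanish on all of $\bC$, in particular on the imaginary axis where it reduces to the Fourier transform of $u_0$, forcing $u_0\equiv 0$ against $\|u_0\|_{L^1}\neq0$. Hence the real zero set $\cN(U_0)$ consists of isolated points. Suppose, for contradiction, that $\theta(t)$ had two distinct subsequential limits $a<b$. Choose $c\in(a,b)$ with $U_0(c)\neq0$, which is possible because $\cN(U_0)\cap(a,b)$ is discrete and so cannot exhaust the interval. Since $\theta$ is continuous and returns arbitrarily close to both $a<c$ and $b>c$ at arbitrarily large times, the intermediate value theorem produces times $\tau_n\to+\infty$ with $\theta(\tau_n)=c$; thus $c$ is itself a subsequential limit, and the first part forces $U_0(c)=0$, a contradiction. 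Therefore $\theta(t)$ has a unique subsequential limit $\theta$, and by boundedness $\theta(t)\to\theta$ with $\theta\in\cN(U_0)$.

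The crux of the argument, and the step I expect to be the main obstacle, is the realization in the first part that one should not fix the frame speed but slave it to the observed zero, $\eta_n=x^{*}(t_n)/(2t_n)$. A fixed drift only places the zero at $x^{*}(t)-2t\eta=2t(\theta(t)-\eta)$, a point of order $t$ where $G$ is exponentially small and the comparison degrades into an uninformative estimate; slaving the drift moves the zero to the peak of the Gaussian, converting the $O(t^{-1})$ remainder of Lemma \ref{lem:high} into the decisive bound $|U_0(\eta_n)|=O(t_n^{-1/2})$. The only technical care required is the uniform control of the weighted $L^1$ norms over the compact range of admissible drifts, which follows at once from \eqref{condi1}.
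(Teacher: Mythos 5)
Your proof is correct, but the decisive analytic step is implemented differently from the paper. Where you invoke Lemma \ref{lem:high} with $n=0$ and the drift slaved to the zero, $\eta_n=x^{*}(t_n)/(2t_n)$, so that $v_n(t_n,0)=0$ and the lemma yields the quantitative bound $|U_0(\eta_n)|\le C'\,t_n^{-1/2}\sup_{|\eta|\le M}\|e^{\eta\cdot}|\cdot|u_0\|_{L^1}\to 0$, the paper argues more elementarily: it rewrites the zero condition $u(t_j,x^{*}(t_j))=0$ as $\int_{\bR}\exp\bigl((2x^{*}(t_j)y-y^{2})/(4t_j)\bigr)u_0(y)\,dy=0$ and passes to the limit by dominated convergence (the integrand is dominated by $e^{M|y|}|u_0(y)|$, integrable by \eqref{condi1}), obtaining $U_0(\theta_0)=0$ directly for every subsequential limit $\theta_0$, with no expansion machinery and no need to track uniformity of constants in the drift. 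Both arguments rest on the same idea --- recentering in the frame moving with the zero --- but yours buys a convergence rate $|U_0(\theta(t_n))|=O(t_n^{-1/2})$ at the cost of the extra bookkeeping (drift-independence of $C(n,d)$ and the uniform weighted-norm bound over $|\eta|\le M$, both of which you correctly verify), while the paper's dominated-convergence argument is shorter and self-contained. The concluding steps are logically equivalent rearrangements: the paper notes that continuity makes every point of $[\liminf\theta,\limsup\theta]$ a subsequential limit, hence the whole interval lies in the discrete set $\cN(U_0)$ and must degenerate to a point, whereas you assume two distinct limits, pick an intermediate point where $U_0\neq 0$, and reach a contradiction through the intermediate value theorem. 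A small merit of your write-up is that you make explicit what the paper leaves implicit: that $U_0\not\equiv 0$ (via its restriction to the imaginary axis, i.e.\ the Fourier transform of $u_0$), which is exactly what guarantees the discreteness of $\cN(U_0)$ used in this final step.
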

\begin{proof}
We first define
\beaa
\overline{\theta}:= \limsup_{t\to+\infty} \dfrac{x^{*}(t)}{2t},\quad \underline{\theta}:= \liminf_{t\to+\infty} \dfrac{x^{*}(t)}{2t}.
\eeaa
and then fix $\theta_0\in [\underline{\theta},\overline{\theta}]$.
Because $x^{*}(t)\in C(T,\infty)$, there exists $\{t_j\}_{j\in\bN}\subset (T,\infty)$ such that $t_j\to+\infty$ and
$x^{*}(t_j)/2t_{j}\to \theta_0$ hold as $j\to+\infty$.
From the assumption that $x^{*}(t)\in \cZ(t)$ for $t>T$,
we have
\beaa
u(t,x^{*}(t_j))=(G(t)*u_0)(x^{*}(t_j))=0 \quad \Leftrightarrow \quad \int_{\bR} \exp\left( \frac{2x^{*}(t_j) y - y^2}{4t_j} \right)u_0(y) dy = 0
\eeaa
for any $j\in\bN$.
From the dominated convergence theorem, 
we obtain 
\beaa
0= \int_{\bR} e^{\theta_0 y} u_0(y) dy = U_0(\theta_0)
\eeaa
as $j\to\infty$.
This means that $\theta_0=\overline{\theta}=\underline{\theta}$ and $\theta_0\in \cN(U_0)$, because $U_0$ is an analytic function on $\bC$ and thus $\cN(U_0)$ is discrete.
\end{proof}
From this result, the asymptotic profiles corresponding to $O(t)$ are expected to move at a certain speed without oscillating.
In particular, if the upper bound of $\cZ(t)$ is $O(t)$, the asymptotic behavior of $\cZ(t)$ is expected to change depending on the nature of $\cN(U_0)$.

We next mention the case that $\cN(U_0)=\emptyset$.
Before describing the result, we introduce a notation.
For a function $f$ on $\bR$ with $f(x)\not\equiv 0$, let $z(f)$ be the number of sign changes; i.e. the supremum of $j$ such that
\beaa
f(x_{i})f(x_{i+1})<0,\quad i=1,2,\ldots,j
\eeaa
for some $-\infty<x_1<x_2<\ldots<x_{j+1}<+\infty$.

If $f(x)$ satisfies $z(f)<+\infty$,
then \eqref{limsup} in the statement of Proposition \ref{prop:bound} always holds by Theorem 1.1 in \cite{Mizoguchi}.
We thus deduce the following result from a proof by contradiction.

\begin{corollary}\label{cor:emp}
Suppose that $u_0\in L^1(\bR)\cap L^{\infty}(\bR)$ satisfies \eqref{condi1}.
Assume that $\cN(U_0)= \emptyset$ and $z(u_0)<\infty$.
Then, there exists $T>0$ such that $\cZ(t)=\emptyset $ for any $t>T$.
\end{corollary}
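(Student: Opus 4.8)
The plan is to argue by contradiction, so suppose the conclusion fails: there is no $T>0$ with $\cZ(t)=\emptyset$ for all $t>T$. First I note that $\cZ(t)$ is finite for each large $t$: it is discrete by \cite{Angenent}, and by Theorem 1.1 of \cite{Mizoguchi} (applicable since $z(u_0)<\infty$) it is contained in the bounded set $[-C_0 t, C_0 t]$, hence finite. Since $t\mapsto \#\cZ(t)$ is integer-valued and non-increasing in $t$ by \cite{DGM, Matano}, it is eventually constant, say equal to $M$ for all $t>T_0$. The negation of the conclusion says precisely that $M\ge 1$, because if $M=0$ then $\cZ(t)=\emptyset$ for $t>T_0$, which is exactly what we want. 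Thus I may assume $M\ge 1$ and aim to produce a contradiction.

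Next I would construct a continuous selection $x^{*}(t)\in\cZ(t)$. For $t>T_0$ write the $M$ zeros in increasing order as $\xi_1(t)<\cdots<\xi_M(t)$. Because $\#\cZ(t)\equiv M$ is constant for $t>T_0$, no collision or annihilation of zeros may occur there; by the structure theory of zeros for one-dimensional parabolic equations \cite{Angenent, DGM, Matano} all zeros are then simple, so $\pa_x u(t,\xi_i(t))\neq 0$, and the implicit function theorem applied to the real-analytic function $u$ gives smooth local branches of zeros. Constancy of the count forces these branches to extend over all of $(T_0,\infty)$ without terminating, so each $\xi_i\in C(T_0,\infty)$. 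In particular $x^{*}:=\xi_1$ is a continuous function with $x^{*}(t)\in\cZ(t)$ for every $t>T_0$.

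Finally I would invoke the two quantitative ingredients. Since $z(u_0)<\infty$, Theorem 1.1 of \cite{Mizoguchi} furnishes $C_0>0$ with $\cZ(t)\subset[-C_0 t, C_0 t]$ for all large $t$, so $\left|x^{*}(t)/2t\right|\le C_0/2$ and hence \eqref{limsup} holds for $x^{*}$. Proposition \ref{prop:bound} then applies and yields that $\theta:=\dlim_{t\to+\infty} x^{*}(t)/2t$ exists and belongs to $\cN(U_0)$. This contradicts the hypothesis $\cN(U_0)=\emptyset$. Therefore $M=0$, i.e. there exists $T>0$ such that $\cZ(t)=\emptyset$ for every $t>T$.

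The main obstacle is the construction of the continuous branch $x^{*}(t)$: one must ensure that, once $\#\cZ(t)$ has stabilized to the constant $M$, the zeros are genuinely simple and trace out continuous curves, so that Proposition \ref{prop:bound} (which requires $x^{*}\in C(T,\infty)$) can be invoked. This is precisely where the one-dimensional parabolic structure theory, namely discreteness of $\cZ(t)$ and the non-increase of its cardinality, is essential; once the continuous selection is in hand, the remainder is a direct combination of Mizoguchi's far-field bound and Proposition \ref{prop:bound}.
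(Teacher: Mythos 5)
Your proof is correct and follows essentially the same route as the paper: the paper's own (very terse) argument is exactly the contradiction you describe, namely that Mizoguchi's bound guarantees \eqref{limsup} for any persistent zero, so Proposition \ref{prop:bound} would produce an element of $\cN(U_0)$, contradicting $\cN(U_0)=\emptyset$. The continuous-selection step you flag as the main obstacle is precisely the detail the paper leaves implicit (cf.\ its remark that Angenent's results yield continuous zero branches), so your write-up is a faithful, more detailed version of the intended proof.
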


We analyze the case $\cN(U_0)\neq \emptyset$.
By applying extended asymptotic expansion, we obtain the following result:
\begin{theorem}\label{thm:mainA}
Suppose that $u_0\in L^1(\bR)\cap L^{\infty}(\bR)$ satisfies \eqref{condi1}.
Assume that $\cN(U_0)\neq \emptyset$.
Then, for all $\eta^{*}\in\cN(U_0)$ with multiplicity $k\in\bN$,
there exist $T>0$ and $x^{*}_j(t)\in \cZ(t)\ (j=1,\ldots,k)$ for $t>T$ satisfying
\beaa
x^{*}_j(t)= 2t \eta^{*}+ 2\sqrt{t}h_j + \dfrac{U^{(k+1)}_0(\eta^{*})}{(k+1) U^{(k)}_0(\eta^{*})}+o(1) \quad (t\to +\infty)
\eeaa
for $j=1,\ldots,k$, where $h_j\ (j=1,\ldots,k)$ are mutually different zero points of the Hermite polynomial $H_k(x)$ 
with the order $k$.
\end{theorem}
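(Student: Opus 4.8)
The plan is to reduce the problem to locating the zeros of the transformed solution $v(t,x) = e^{\eta^* x + t(\eta^*)^2}u(t, x + 2t\eta^*)$ associated with the point $\eta^* \in \cN(U_0)$. Since the exponential prefactor never vanishes, one has $u(t,y) = 0$ if and only if $v(t, y - 2t\eta^*) = 0$; writing $y = 2t\eta^* + 2\sqrt{t}\,\xi$, the zeros $x^*(t)\in\cZ(t)$ correspond exactly to the zeros in the self-similar variable $\xi = (x^*(t) - 2t\eta^*)/(2\sqrt{t})$. First I would apply Lemma \ref{lem:high} with $d = 1$, $\eta = \eta^*$ and $n = k+1$. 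Because $\eta^*$ has multiplicity $k$, all terms of order $m < k$ vanish and, after multiplying the resulting inequality by $(4\pi t)^{1/2}(4t)^{k/2}e^{\xi^2}$ and setting $\xi = x/(2\sqrt{t})$, I obtain on any compact $\xi$-interval the expansion
\be
\Phi(t,\xi) = \dfrac{U^{(k)}_0(\eta^*)}{k!}H_k(\xi) + \dfrac{U^{(k+1)}_0(\eta^*)}{2\sqrt{t}\,(k+1)!}H_{k+1}(\xi) + O(t^{-1}),
\ee
where $\Phi(t,\xi) := (4\pi t)^{1/2}(4t)^{k/2}e^{\xi^2}v(t, 2\sqrt{t}\,\xi)$ vanishes precisely when $v(t, 2\sqrt{t}\,\xi)$ does. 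The choice $n = k+1$ is what makes the remainder $O(t^{-(k+3)/2})$ collapse to $O(t^{-1})$ after rescaling, small enough not to interfere with the correction term.

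Next I would carry out a perturbation analysis about each simple zero $h_j$ of $H_k$. Set $P(\xi) := \frac{U^{(k)}_0(\eta^*)}{k!}H_k(\xi)$, so that $P(h_j) = 0$ and $P'(h_j) = \frac{U^{(k)}_0(\eta^*)}{k!}H_k'(h_j) \neq 0$. Inserting the shrinking ansatz $\xi = h_j + \sigma/(2\sqrt{t})$ into the expansion, Taylor-expanding $P$ and $H_{k+1}$ about $h_j$, and multiplying through by $2\sqrt{t}$, the leading contributions combine into
\be
2\sqrt{t}\,\Phi\!\left(t, h_j + \dfrac{\sigma}{2\sqrt{t}}\right) = \sigma\, P'(h_j) + \dfrac{U^{(k+1)}_0(\eta^*)}{(k+1)!}H_{k+1}(h_j) + O(t^{-1/2}),
\ee
uniformly for $\sigma$ in a bounded interval. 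The right-hand side converges to the affine function $g(\sigma) := \sigma P'(h_j) + \frac{U^{(k+1)}_0(\eta^*)}{(k+1)!}H_{k+1}(h_j)$, whose unique root is $\sigma_* = -\frac{U^{(k+1)}_0(\eta^*)H_{k+1}(h_j)}{(k+1)!\,P'(h_j)}$.

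To simplify $\sigma_*$ I would invoke the Hermite identities $H_k'(x) = 2kH_{k-1}(x)$ and $H_{k+1}(x) = 2xH_k(x) - 2kH_{k-1}(x)$: evaluated at a zero $h_j$ of $H_k$ these give $H_{k+1}(h_j) = -H_k'(h_j)$, whence $\sigma_* = \frac{U^{(k+1)}_0(\eta^*)}{(k+1)U^{(k)}_0(\eta^*)}$, exactly the stated constant. Existence of a genuine zero then follows from the intermediate value theorem: since $g$ changes sign across $\sigma_*$ with slope $P'(h_j) \neq 0$, and $2\sqrt{t}\,\Phi(t, h_j + \sigma/(2\sqrt{t}))$ converges to $g$ uniformly on a small interval around $\sigma_*$, the continuous function $\Phi(t,\cdot)$ has, for all $t$ larger than some $T$, a zero $\sigma_j(t) \to \sigma_*$. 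Unwinding the change of variables yields $x^{*}_j(t) = 2t\eta^* + 2\sqrt{t}\,h_j + \sigma_j(t) = 2t\eta^* + 2\sqrt{t}\,h_j + \sigma_* + o(1)$, and distinctness of the $x^{*}_j(t)$ for large $t$ is inherited from the distinctness of the $h_j$, since the associated $\xi$-values lie in disjoint shrinking neighborhoods.

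The main obstacle I anticipate is bookkeeping the error terms with enough precision to isolate the constant-order correction: one must verify that the remainder in Lemma \ref{lem:high}, together with the Taylor remainders from expanding $P$ and $H_{k+1}$, stays $o(1)$ after the amplification by $2\sqrt{t}$, so that the limiting equation $g(\sigma) = 0$ genuinely controls the location of the zero to constant order rather than merely to order $\sqrt{t}$. The Hermite identity $H_{k+1}(h_j) = -H_k'(h_j)$ is the algebraic crux that turns the a priori $j$-dependent coefficient $\sigma_*$ into the clean, $j$-independent expression stated in the theorem.
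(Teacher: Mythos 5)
Your proposal is correct and follows essentially the same route as the paper's proof: the same change of frame $v(t,x)=e^{\eta^*x+t(\eta^*)^2}u(t,x+2t\eta^*)$, the same application of Lemma \ref{lem:high} with $n=k+1$ so that only the $m=k,k+1$ terms survive, the same localization near $2\sqrt{t}h_j$ with the Hermite identity $H_{k+1}(h_j)=-H_k'(h_j)$ producing the $j$-independent constant, and the same intermediate value theorem conclusion. The only differences are organizational (you work in the rescaled variable $\xi$ with an explicit perturbation parameter $\sigma$ and strip off the Gaussian by multiplying by $e^{\xi^2}$, while the paper shifts $x\mapsto 2\sqrt{t}h+x$ and keeps the factor $e^{-h^2}$ in the limit), which do not change the substance of the argument.
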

\begin{proof}
Let $v(t,x)= e^{\eta^{*} x+t(\eta^{*})^2}u(t,x+2t\eta^{*})$,
and analyze zero points of $v(t,x)$ 
By applying Lemma \ref{lem:high} with $d=1$, $\eta=\eta^{*}$ and $n=k+1$,
there is $C>0$ such that
\beaa
\left|v(t,x)- G(t,x) \dsum^{k+1}_{m=k} \dfrac{U^{(m)}_0(\eta^{*})}{(4t)^{m/2} m!} H_{m}\left(\dfrac{x}{2\sqrt{t}} \right) \right| \le C t^{- (k+3)/2}
\eeaa
for all $x\in\bR$.
Let $h$ be a zero point of $H_k(x)$.
Then, we rewrite the inequality as 
\beaa
\left| v(t, 2\sqrt{t}h + x)- G(t,2\sqrt{t}h + x) \dsum^{k+1}_{m=k} \dfrac{U^{(m)}_0(\eta^{*})}{ (4t)^{m/2} m !} H_{m}\left( h + \dfrac{x}{2\sqrt{t}} \right) \right| \le C t^{- (k+3)/2}
\eeaa
by simple computation and changing variable $x$ as $2\sqrt{t} h + x$.
Note that $C$ is re-taken here.
Since we know that $H'_{k}(h)=2hH_{k}(h)-H_{k+1}(h)=-H_{k+1}(h)\neq 0$, 
we deduce
\beaa
&&(4t)^{(k+2)/2}G(t,2\sqrt{t}h + x) \dsum^{k+1}_{m=k} \dfrac{U^{(m)}_0(\eta^{*})}{ (4t)^{m/2} m !} H_{m}\left( h + \dfrac{x}{2\sqrt{t}} \right) \\
&& = \exp\left(-\dfrac{ ( x+ 2\sqrt{t}h)^2 }{4t} \right)
\left\{ 2\sqrt{t} \dfrac{U^{(k)}_0(\eta^{*})}{k!}  H_{k}\left( h + \dfrac{x}{2\sqrt{t}} \right) \right. \\
&& \hspace{6cm}\left.+ \dfrac{U^{(k+1)}_0(\eta^{*})}{(k+1)!}  H_{k+1}\left( h + \dfrac{x}{2\sqrt{t}} \right) \right\} \\
&& \to e^{-h^2} \left\{ \dfrac{U^{(k)}_0(\eta^{*})}{k!} H'_k(h) x  + \dfrac{U^{(k+1)}_0(\eta^{*})}{(k+1)!} H_{k+1}(h) \right\} \quad (t\to+\infty) \\
&&= \dfrac{e^{-h^2} H_{k+1}(h) }{k!} \left\{ \dfrac{U^{(k+1)}_{0}(\eta^{*})}{k+1} -U^{(k)}_0(\eta^{*})x\right\}.
\eeaa
We note that the limit is right in the sense of local uniforms.
We also know the convergence rate of the limit, which allows us to construct a zero point by using the intermediate value theorem.
Thus, there exist $T>0$ and $p^{*}(t)\in\bR$ for $t>T$ such that
\beaa
v(t,p^{*}(t))= 0, \quad 
p^{*} (t) = 2\sqrt{t} h + \dfrac{U^{(k+1)}_{0}(\eta^{*})}{(k+1)U^{(k)}_{0}(\eta^{*})} + o(1)\quad (t\to+\infty).
\eeaa
Thus, there exist such zero point $p^{*}_j(t)$ of $v(t,x)$ for each $h_j$,.

From the above discussion, setting $x^{*}_j(t)= 2t \eta^{*} + p^{*}_j(t)\ (j=1,2,\ldots,k)$ yields $x^{*}_j (t)\in \cZ(t)$ and the desired asymptotic behavior.
\end{proof}

\begin{remark}
We can compose $x^{*}_j(t)\in \cZ(t)\ (j=1,\ldots,k)$ obtained in Theorem \ref{thm:mainA} as $x^{*}_{j}\in C(T,\infty)\ (j=1,\ldots,k)$ by applying the argument and results of \cite{Angenent}. 
\end{remark}
From Theorem \ref{thm:mainA}, for any $\eta^{*}\in \cN(U_0)$ and sufficiently large $t>0$, 
there exists $x^{*}(t)\in \cZ(t)$ such that 
\beaa
\dlim_{t\to+\infty} \dfrac{x^{*}(t)}{2t} = \eta^{*}.
\eeaa
Thus, if $\eta^{*}\neq 0$, then $x^{*}(t)$ diverges with $O(t)$ as time passes.
In the case that $0\in \cN(U_0)$ with multiplicity $k>1$, 
for sufficiently large $t>0$, there exists $x^{*}(t)\in \cZ(t)$ such that 
\beaa
\dlim_{t\to+\infty} \dfrac{x^{*}(t)}{2\sqrt{t}} = h,
\eeaa
where $h$ is a zero point of $H_k(x)$.
When $0\in \cN(U_0)$ with multiplicity $k$ and $k$ is odd,
for sufficiently large $t>0$, there is $x^{*}(t)\in Z(t)$ satisfying 
\beaa
\dlim_{t\to+\infty} x^{*}(t) =  \dfrac{U^{(k+1)}_0(0)}{(k+1) U^{(k)}_0(0)} 
= \dfrac{\dint_{\bR} y^{k+1}u_0(y)dy}{(k+1)\dint_{\bR} y^{k} u_0(y)dy}.
\eeaa
Hence, by analyzing the zero points of $U_0(\eta)$ and their multiplicity,
we can understand the long-time behavior of elements of $\cZ(t)$.

\section{The case $k=1$ in high dimensional space}\label{sec:d>1}
The high-dimensional case is discussed.
In this case, no unified results have yet been obtained, 
and even the upper bound of the zero level set has not been clarified.
For the initial data $u_0(x)$ that is radial symmetric,
Chung \cite{Chung} considered the case that $\eta^{*}=0\in\cN(U_0)$ and proved that 
$\tilde{H}_{d,k}(x;\eta^{*})$ can be represented using the generalized Laguerre polynomial.
Chung also analyzed the case that $\eta^{*}=0\in\cN(U_0)$ with mulitiplicity $k=1,2$ for $d=2$.
This reveals that there exists a zero point that is $O(\sqrt{t})$ even in higher dimensions.
However, when $\tilde{\eta}\neq 0$, 
the asymptotic behavior of the zero point of the solution is not obtained.
In particular, the asymptotic profile of $O(t)$ is not known.

The analysis under general assumptions is a future problem, and here we summarize the result of asymptotic profiles of zero points for the case $d\ge 2$, $\cN(U_0)\neq \emptyset$, and $k=1$, i.e. zero points of $U_0(\eta)$ have multiplicity 1.
Let $\eta^{*}\in \cN(U_0)$ with multiplicity 1.
Then, we obtain
\beaa
\tilde{H}_{d,1}(x,;\eta^{*}) = \dfrac{2e^{-|x|^2}}{\pi^{d/2}} \dsum^{d}_{j= 1} \dfrac{\partial U_0}{\partial \eta_j}(\eta^{*}) x_{j} =  \dfrac{2e^{-|x|^2}}{\pi^{d/2}} \lng \nabla_{\eta} U_0(\eta^{*}), x\rng
\eeaa
by using $H_0(p)=1,\ H_1(p)=2p$ for $p\in\bR$.
Then, the following theorem is obtained as a result which is a natural extension of the case $d=1$.
\begin{theorem}
Suppose that $u_0\in L^1(\bR^d)\cap L^{\infty}(\bR^d)$ satisfies \eqref{condi1}.
Assume that $\cN(U_0)\neq \emptyset$.
Then, for all $\eta^{*}\in\cN(U_0)$ with multiplicity $1$,
there exist $T>0$ and $x^{*}(t)\in \cZ(t)$ for $t>T$ satisfying
\beaa
x^{*}(t)= 2t \eta^{*}+ \dfrac{\Delta_{\eta} U_0(\eta^{*})}{2|\nabla_{\eta} U_0(\eta^{*})|^2} \nabla_{\eta} U_0(\eta^{*}) +o(1) \quad (t\to +\infty).
\eeaa
\end{theorem}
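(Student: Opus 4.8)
The plan is to mirror the proof of Theorem \ref{thm:mainA}, replacing the single Hermite variable by the multivariate expansion of Lemma \ref{lem:high} and then breaking the resulting degeneracy by restricting to a well-chosen line. Set $v(t,x)=e^{\langle \eta^{*},x\rangle+t|\eta^{*}|^{2}}u(t,x+2t\eta^{*})$ and apply Lemma \ref{lem:high} with $\eta=\eta^{*}$ and $n=2$. Since $\eta^{*}$ has multiplicity $1$ we have $U_0(\eta^{*})=0$, so the $|\alpha|=0$ term drops out and only the terms with $|\alpha|=1,2$ survive, with remainder $O(t^{-(d+3)/2})$. I will look for a zero of $v(t,\cdot)$ at a \emph{bounded} point $p^{*}(t)$ and then put $x^{*}(t)=2t\eta^{*}+p^{*}(t)$, which lies in $\cZ(t)$ because the exponential prefactor never vanishes.

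First I would organize the surviving terms. Write $b:=\nabla_{\eta}U_0(\eta^{*})$ and let $A$ be the Hessian $(\partial_{\eta_i}\partial_{\eta_j}U_0(\eta^{*}))_{i,j}$, so that $\operatorname{tr}A=\Delta_{\eta}U_0(\eta^{*})$. Using $H_1(s)=2s$, the $|\alpha|=1$ contribution collapses to $\tfrac{1}{2t}\langle b,x\rangle$; using $H_2(s)=4s^{2}-2$ on the diagonal indices and $H_1(s)H_1(s')$ on the off-diagonal ones, the $|\alpha|=2$ contribution assembles into $\tfrac{1}{8t^{2}}\langle x,Ax\rangle-\tfrac{\operatorname{tr}A}{4t}$. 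Hence
\[
v(t,x)=G(t,x)\Big[\tfrac{1}{2t}\langle b,x\rangle+\tfrac{1}{8t^{2}}\langle x,Ax\rangle-\tfrac{\operatorname{tr}A}{4t}\Big]+O\big(t^{-(d+3)/2}\big).
\]
Next I would evaluate this at a bounded point $x=p$ and rescale. Since $(4t)^{(d+2)/2}G(t,p)=\tfrac{4t}{\pi^{d/2}}e^{-|p|^{2}/4t}$, the remainder obeys $(4t)^{(d+2)/2}t^{-(d+3)/2}=O(t^{-1/2})$, and the quadratic term is $O(t^{-1})$, one obtains, locally uniformly in $p$,
\[
(4t)^{(d+2)/2}v(t,p)\longrightarrow \frac{1}{\pi^{d/2}}\big(2\langle b,p\rangle-\operatorname{tr}A\big)=:\Phi(p)\qquad(t\to+\infty).
\]

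Unlike the case $d=1$, the limit $\Phi$ is affine and vanishes on an entire hyperplane, so the leading term alone does not isolate a zero; this degeneracy is the main obstacle. I would resolve it by restricting to the ray $p=s\,b$, $s\in\bR$, along which $\Phi(sb)=\pi^{-d/2}(2s|b|^{2}-\operatorname{tr}A)$ is strictly increasing (because $|b|^{2}>0$, as $\eta^{*}$ has multiplicity $1$) and has a simple zero at $s^{*}=\operatorname{tr}A/(2|b|^{2})$. By the locally uniform convergence, $(4t)^{(d+2)/2}v(t,sb)$ inherits a strict sign change across $s\approx s^{*}$ for large $t$: it is bounded away from $0$ with definite signs on $\{s\le s^{*}-\varepsilon\}$ and $\{s\ge s^{*}+\varepsilon\}$, so by the intermediate value theorem and continuity of $v(t,\cdot)$ there is a zero $s(t)\,b$ with $|s(t)-s^{*}|<\varepsilon$; letting $\varepsilon\downarrow 0$ forces $s(t)\to s^{*}$. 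Setting $p^{*}(t)=s(t)b$ and $x^{*}(t)=2t\eta^{*}+p^{*}(t)$ then yields
\[
x^{*}(t)=2t\eta^{*}+\frac{\Delta_{\eta}U_0(\eta^{*})}{2|\nabla_{\eta}U_0(\eta^{*})|^{2}}\,\nabla_{\eta}U_0(\eta^{*})+o(1),
\]
as claimed. The only genuinely multidimensional step is the choice of the direction $b$: it is forced because $b$ is normal to the limiting hyperplane $\{\langle b,p\rangle=\tfrac12\operatorname{tr}A\}$, so the ray in direction $b$ crosses it transversally and meets it at its point closest to $2t\eta^{*}$, which is exactly the location recorded in the statement. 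Verifying the rate-equipped local uniform convergence needed to run the intermediate value argument is routine given the explicit $O(t^{-1/2})$ remainder.
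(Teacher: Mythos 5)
Your proposal is correct and takes essentially the same route as the paper: the same moving-frame substitution $v(t,x)=e^{\langle\eta^{*},x\rangle+t|\eta^{*}|^{2}}u(t,x+2t\eta^{*})$, the same application of Lemma \ref{lem:high} with $n=2$, and the same locally uniform affine limit $\pi^{-d/2}\bigl(2\langle\nabla_{\eta}U_0(\eta^{*}),p\rangle-\Delta_{\eta}U_0(\eta^{*})\bigr)$ whose zero set determines the constant term. Your restriction to the ray $p=s\,\nabla_{\eta}U_0(\eta^{*})$ is just a concrete implementation of the intermediate-value/implicit-function step that the paper invokes without spelling out the details.
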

\begin{proof}
By applying Lemma \ref{lem:high} with $n=2$ and multiplying $(4t)^{(d+2)/2}$ by both sides of the inequality, we have
\beaa
\left| (4t)^{(d+2)/2}v(t, x)- \left\{ 2\sqrt{t} \tilde{H}_{d,1}\left( \dfrac{x}{2\sqrt{t}} ; \eta^{*}\right) +  \tilde{H}_{d,2}\left(\dfrac{x}{2\sqrt{t}}\right) \right\}  \right|  \le C t^{- 1/2} 
\eeaa
for all $x\in\bR^{d}$ with a positive constant $C>0$.
Hence, for all $x\in\bR$, 
\be\label{eq:conv}
(4t)^{(d+2)/2}v(t,x) \to \lng \nabla_{x}\tilde{H}_{d,1}\left( 0 ; \eta^{*}\right), x \rng +  H_{d,2}\left(0;\eta^{*}\right) 
\ee
holds as $t\to+\infty$ in the sense of uniform convergence.
Since we deduce
\beaa
\dfrac{\partial \tilde{H}_{d,1}}{\partial x_m} (x;\eta) = - \dfrac{4 x_m e^{-|x|^2} }{\pi^{d/2}} \dsum^{d}_{j= 1} \dfrac{\partial U_0(\eta^{*})}{\partial \eta_j} x_{j} + \dfrac{2e^{-|x|^2}}{\pi^{d/2}}  \dfrac{\partial U_0(\eta^{*})}{\partial \eta_m}
\eeaa
for $m=1,2,\ldots,d$, we get $\nabla_{x}\tilde{H}_{d,1}\left( 0 ; \eta^{*}\right) = \dfrac{2}{\pi^{d/2}}\nabla_{\eta} U_0 (\eta^{*})$.
Also, we have
\beaa
H_{d,2}(0) &=& \dfrac{1}{\pi^{d/2}} \dsum_{|\alpha|=2} \dfrac{\partial^{\alpha}U_0(\eta^{*})}{ \alpha!} \prod^{d}_{j=1}  H_{\alpha_j}\left(0 \right) \\
&=& \dfrac{1}{2 \pi^{d/2}} \dsum^{d}_{j=1} \dfrac{\partial^2 U_0(\eta^{*})}{ \partial \eta^2_j}  H_{2}\left(0 \right) \\
&=& - \dfrac{1}{\pi^{d/2}} \Delta_{\eta} U_0 (\eta^{*}). 
\eeaa
Hence, the convergence of \eqref{eq:conv} is rewritten as
\beaa
\lng \nabla_{x}\tilde{H}_{d,1}\left( 0 ; \eta^{*}\right), x \rng +  H_{d,2}\left(0;\eta^{*}\right) = \dfrac{1}{\pi^{d/2}} \{ 2 \lng \nabla_{\eta} U_0(\eta^{*}), x\rng - \Delta_{\eta} U_0(\eta^{*})  \}.
\eeaa
Then, $x=\dfrac{\Delta_{\eta} U_0(\eta^{*})}{2|\nabla_{\eta} U_0(\eta^{*})|^2} \nabla_{\eta} U_0(\eta^{*}) \in\bR^{d}$ satisfies
\beaa
2 \lng \nabla_{\eta} U_0(\eta^{*}), x\rng - \Delta_{\eta} U_0(\eta^{*}) = 0.
\eeaa
Since \eqref{eq:conv} is uniformly convergent and the convergence rate is known, we can construct the zero point by using the intermediate value theorem and the implicit function theorem.
Therefore, there exist $T>0$ and $p^{*}(t)\in\cZ(t)$ for $t>T$ satisfying
\beaa
v(t,p^{*}(t))= 0 \ (t>T),\quad p^{*}(t)=  \dfrac{\Delta_{\eta} U_0(\eta^{*})}{2|\nabla_{\eta} U_0(\eta^{*})|^2} \nabla_{\eta} U_0(\eta^{*}) + o(1)\quad (t\to+\infty).
\eeaa

Thus, setting $x^{*}(t)= 2t \eta^{*} + p^{*}(t)$ yields $x^{*}(t)\in \cZ(t)$ and the desired asymptotic behavior.
\end{proof}

Here we treat the case of the radial symmetric initial data as a corollary.
Let $u_0$ be radial symmetric function on $\bR^{d}$ satisfying \eqref{condi1}.
Then, $U_0(\eta)$ is also a radial symmetric function on $\bR^{d}$.
We assume that $\cN(U_0)\neq\emptyset$.
Since $U_0(\eta)$ is an analytic function, $\cN(U_0)$ must be discrete with respect to the radial direction.
Then, $U_0(\eta) = U_s(|\eta|)$ satisfies
\beaa
\dfrac{\partial U_0}{\partial \eta_j}(\eta) = \dfrac{\eta_j}{|\eta|} U'_s(|\eta|), \quad 
\dfrac{\partial^2 U_0}{\partial \eta^2_j}(\eta) = \dfrac{\eta^2_j}{|\eta|^2} U''_s(|\eta|) + \left(\dfrac{1}{|\eta|} - \dfrac{\eta^2_j}{|\eta|^{3/2}} \right) 
 U'_s(|\eta|)
\eeaa
for $|\eta|>0$ and $j=1,2,\ldots,d$.
Hence, 
\beaa
\dfrac{\Delta_{\eta} U_0(\eta)}{2|\nabla_{\eta} U_0(\eta)|^2} \nabla_{\eta} U_0(\eta) = \dfrac{1}{2 }  \left( \dfrac{U''_s(|\eta|)}{U'_s(|\eta|)} + \dfrac{d-1}{|\eta|} \right) \dfrac{\eta}{|\eta|}
\eeaa
holds if $U'_s(|\eta|)\neq 0$.
This yields the following result:
\begin{corollary}
Suppose that $u_0\in L^1(\bR^d)\cap L^{\infty}(\bR^d)$ is a radial function satisfying \eqref{condi1}.
Assume that $\cN(U_0)\neq \emptyset$.
Then, for any $\eta^{*}\in\cN(U_0)$ with multiplicity $1$, there exist $T>0$ and $r^{*}(t)>0$ for $t>T$ satisfying
\beaa
\{ x\in\bR^{d} \mid |x|= r^{*}(t) \} \subset \cZ(t) 
\eeaa
for $t>T$ and 
\beaa
r^{*}(t) = 2t |\eta^{*}| + \dfrac{1}{2 }  \left( \dfrac{U''_s(|\eta^{*}|)}{U'_s(|\eta^{*}|)} + \dfrac{d-1}{|\eta^{*}|} \right) + o(1) \quad (t\to+\infty).
\eeaa
\end{corollary}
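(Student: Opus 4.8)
The plan is to deduce this corollary directly from the preceding theorem together with the radial symmetry of the solution. First I would observe that since $u_0$ is radial and the heat kernel $G(t,\cdot)$ is radial, the solution $u(t,\cdot)=G(t)*u_0$ is radial in $x$ for every $t>0$; consequently $\cZ(t)$ is a union of spheres centered at the origin, and it suffices to locate a single zero point and take its norm. I would also record that for radial $U_0$ one has $\nabla_{\eta}U_0(0)=0$, so a zero point of multiplicity $1$ necessarily satisfies $\eta^{*}\neq 0$ and $U_s'(|\eta^{*}|)\neq 0$; in particular every quantity in the claimed formula is well defined and the radial identity computed just before the statement is applicable.

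Next I would apply the preceding theorem to produce $x^{*}(t)\in\cZ(t)$ with
\[
x^{*}(t)=2t\eta^{*}+\frac{\Delta_{\eta}U_0(\eta^{*})}{2|\nabla_{\eta}U_0(\eta^{*})|^2}\nabla_{\eta}U_0(\eta^{*})+o(1)\quad(t\to+\infty),
\]
and substitute the radial identity already derived in the text, namely that the constant vector equals $\tfrac12\big(U_s''(|\eta^{*}|)/U_s'(|\eta^{*}|)+(d-1)/|\eta^{*}|\big)\,\eta^{*}/|\eta^{*}|$. Writing $c:=\tfrac12\big(U_s''(|\eta^{*}|)/U_s'(|\eta^{*}|)+(d-1)/|\eta^{*}|\big)$, this gives $x^{*}(t)=\frac{\eta^{*}}{|\eta^{*}|}\big(2t|\eta^{*}|+c\big)+\mathbf{e}(t)$ with $\mathbf{e}(t)=o(1)$, so that both the leading term and the $O(1)$ correction point along the fixed direction $\eta^{*}/|\eta^{*}|$.

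Finally I would set $r^{*}(t):=|x^{*}(t)|$ and verify the asymptotics by taking norms. Decomposing $\mathbf{e}(t)=\mathbf{e}_{\parallel}(t)+\mathbf{e}_{\perp}(t)$ into parts parallel and perpendicular to $\eta^{*}$, one has $|x^{*}(t)|^2=\big(2t|\eta^{*}|+c+e_{\parallel}(t)\big)^2+|\mathbf{e}_{\perp}(t)|^2$, where $e_{\parallel}(t)=o(1)$ and $|\mathbf{e}_{\perp}(t)|^2=o(1)$. Since the parallel length tends to $+\infty$, a first-order expansion of the square root shows that the perpendicular part contributes only $o(t^{-1})$, whence $r^{*}(t)=2t|\eta^{*}|+c+o(1)$; by the radial symmetry of $u(t,\cdot)$ the whole sphere $\{\,|x|=r^{*}(t)\,\}$ then lies in $\cZ(t)$ for $t>T$, which is the assertion. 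The one step meriting genuine care is precisely this norm estimate: because one divides by the $O(t)$ parallel length, one must confirm that the vector-valued $o(1)$ error of the theorem cannot leak into the constant term, and this is guaranteed exactly because the leading and $O(1)$ terms are collinear with $\eta^{*}$, so the perpendicular error enters the radius only quadratically.
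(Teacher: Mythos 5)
Your proposal is correct and follows essentially the same route as the paper: apply the preceding theorem to get a zero point $x^{*}(t)=2t\eta^{*}+c\,\eta^{*}/|\eta^{*}|+o(1)$ with $c$ given by the radial identity computed before the corollary, then set $r^{*}(t)=|x^{*}(t)|$ and use radial symmetry of $u(t,\cdot)$ to promote the single zero to the whole sphere. Your careful collinearity/perpendicular decomposition in the norm step is fine (and could even be shortened to a reverse triangle inequality, since $\bigl||a+\mathbf{e}|-|a|\bigr|\le|\mathbf{e}|=o(1)$ with $a=(2t|\eta^{*}|+c)\,\eta^{*}/|\eta^{*}|$), and your observation that multiplicity $1$ forces $\eta^{*}\neq 0$ and $U_s'(|\eta^{*}|)\neq 0$ matches the paper's remark.
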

\begin{remark}
Note that $k>1$ when $\eta^{*}=0\in\cN(U_0)$. 
In such a case, Chung \cite{Chung} has obtained a result on the behavior of zero points moving in $O(\sqrt{t})$.
\end{remark}
When $k\ge 2$, we may see the behavior of term corresponding to $O(\sqrt{t})$ depending on the nature of $\tilde{H}_{d,k}(x;\eta^{*})$, but it is difficult to find general properties about the zero point. 
There may be some good ways to analyze them, but we have not found them yet.
This point is currently left as an open problem.

\section{Related problems}\label{sec:dis}
Finally, some related problems are mentioned as developmental questions.
\subsection{Hot spot problems}
In Euclidean space $\bR^{d}$, it is known that when the initial data is non-zero and non-negative, the hot spot converges to a point determined by the initial data.
When the initial data is sign-changing, the asymptotic behavior of the hot spot is different from the case that the initial data is non-negative. 
In fact, we can construct an example where all diverge at the zero point of $u_{x}(t,x)$, which is a solution of the heat equation in the case of $d=1$.
This means that it corresponds to the case where all the critical points, which include the maximum and minimum points of the solution of the heat equation, diverge.
To reveal the motion of hot spots, a more detailed evaluation is needed to determine which critical points will become hot spots. 
The hot spot problem for the initial data that is sign-changing is one interesting future problems.

\subsection{Reaction-diffusion equations}
It is also interesting to investigate the dynamics near the constant state of the reaction-diffusion equation:
\beaa
\dfrac{\partial u}{\partial t} =\Delta_{x} u + f(u) \quad (t>0,\ x\in\bR^d),
\eeaa
where $u=u(t,x)\in\bR$ and $f:\bR\to\bR$ is a smooth function.
If $f(u)$ satisfies $f(0)=0$,
then we can expect that the solution dynamics can be approximated by a solution of a linearized equation around $u=0$ as long as the solution is sufficiently small.
Hence, the behavior of the zero point of the solution can be analyzed by attributing it to the diffusion equation, and the influence of the nonlinear term may be seen.

One particularly interesting topics is the case of Fujita-type nonlinear terms $f(u)= |u|^{p-1}u$ for $p>1$.
Mizoguchi \cite{Mizoguchi} discussed the upper bound of the zero level set when $d=1$.
It reported that the zero level set has similar properties to the diffusion equation when $p>3$ and the decay of the global solution is sufficiently fast.
Furthermore, Mizoguchi and Yanagida \cite{MY} reported that a global solution exists for $d=1$ by changing the sign of the initial data even when $p$ is smaller than the Fujita exponent.
They proposed that a sign change weakens the condition for the global existence of the solution to the Fujita equation.
The analysis of the zero level set may therefore be an indicator for considering the conditions for the global existence of a solution.
Analysis using higher-order asymptotic expansions of solutions such as this paper may be useful and should be considered in the future.

\subsection{Nonlocal diffusion equations}
The asymptotic behavior of the zero-level set of another diffusion equation can be considered as another extension of the problem.
As an example, we consider the case of the space fractional diffusion equation:
\beaa
\begin{cases}
\dfrac{\partial u}{\partial t}+(-\Delta_{x})^{s}u =0 \quad (t>0,\ x\in\bR^d), \vspace{3mm}\\
u(0,x)=u_0(x) \quad (x\in\bR^d),
\end{cases}
\eeaa
where $(-\Delta)^{s}$ is the fractional power of the Laplace operator with $0<s<1$
In this case, it can be deduced that the zeros disappear at a finite time when $u_0\in L^1(\bR^d)$ is compactly supported and its integral value is non-zero, 
because the evaluation of the relative error has been derived \cite{Luis}. 
In addition, when the integral value of the initial data is zero, the asymptotic behavior of the zero level set can be considered.
The specific behavior of the zero level set can be considered using the asymptotic expansion of the solution \cite{IKM}.

Furthermore, the asymptotic behavior of the zero point of the solutions to the time-fractional diffusion equation (the diffusion-wave equation) \cite{EK} and 
the nonlocal diffusion equation \cite{AMRT} may also be considered, but we leave them as open problems.

\section*{Acknowledgments}
The author expresses his sincere gratitude to Prof. Shin-Ichiro Ei (Hokkaido University),
Prof. Eiji Yanagida (Tokyo Institute of Technology) and Prof. Kazuhiro Ishige (The University of Tokyo)  for their useful suggestions and valuable advice and thanks to Edanz (https://jp.edanz.com/ac) for editing a draft of this manuscript.
{The author is partially supported by JSPS KAKENHI Grant Number JP21J10036 and JP23K13013.}


\end{document}